\documentclass[11pt]{amsart}
\usepackage{graphicx}
\usepackage{amsmath}
\usepackage{amscd}
\usepackage{amssymb}
\usepackage{latexsym}
\usepackage{epstopdf}
\DeclareGraphicsRule{.tif}{png}{.png}{`convert #1 `dirname #1`/`basename #1 .tif`.png}

\newcommand{\Q}{\mathbb{Q}}
\newcommand{\Z}{\mathbb{Z}}
\newcommand{\C}{\mathbb{C}}

\newcommand{\set}[1]{\{ #1 \}}

\newcommand{\dbar}{\overline{\partial}}
\newcommand{\twopi}[1]{\frac{ #1 }{2\pi i}}
\newcommand{\Proj}{\mathbb{P}}
\newcommand{\Bl}[1]{\widetilde{#1}}

\newtheorem{thm}{Theorem}
\newtheorem{prop}{Proposition}
\newtheorem{lem}{Lemma}
\newtheorem{ex}{Example}
\newtheorem{defn}{Definition}
\newtheorem{rmk}{Remark}
\newtheorem*{ack*}{Acknowledgements}

\title{Singular Elliptic Genus of Normal Surfaces}
\author{Robert Waelder}

\begin{document}
\begin{abstract}
We define the singular elliptic genus for arbitrary normal surfaces, prove that it is a birational invariant, and show that it generalizes the singular elliptic genus of Borisov and Libgober and the stringy $\chi_y$ genus of Batyrev and Veys.
\end{abstract}
\email{rwaelder@math.ucla.edu}
\maketitle

\section{Introduction}
The generalization of smooth invariants to singular varieties has a number of interesting applications. One example is the topological mirror symmetry test, which asserts that mirror pairs $Z$ and $\hat{Z}$ must have mirror Hodge numbers in the sense that $h^{p,q}(Z) = h^{n-p,q}(\hat{Z})$. When either $Z$ or $\hat{Z}$ is singular, it turns out that one has to replace ordinary Hodge numbers with the stringy Hodge numbers of Batyrev. For $Z$ a $\Q$-Gorenstein variety with log-terminal singularties, Batyrev defines the stringy Hodge numbers of $Z$ as the coefficients of $u^p v^q$ of the stringy $E$-function, constructed as follows: Take $f:X\rightarrow Z$ to be a log resolution of singularities with $K_X = f^*K_Z + \sum_I a_i D_i$. Then $E_{str}(Z;u,v) =$
\begin{align}\label{E string}
\sum_{J\subset I}E(D_{J}^{o};u,v)\prod_{J}\frac{uv-1}{(uv)^{a_i+1}-1}.
\end{align}
Here $D_{J}^{o} = \bigcap_J D_j\backslash \bigcup_{J^c} D_i$, and $E(D_{J}^{o};u,v)$ denotes the ordinary $E$-function. Log-terminality means that $a_i > -1$. In particular, we need not worry about dividing by zero in the above expression. Batyrev's stringy $E$-function has an elegant interpretation in terms of motivic integration, and its independence on the choice of a log resolution ultimately follows from the change of variable formula for motivic integrals. In addition to providing the proper context for the topological mirror symmetry test, the stringy $E$-function is an important tool in Batyrev's proof of the McKay correspondence \cite{B}. 

Along similar lines, Borisov, Libgober, and Chin-Lung Wang have generalized the complex elliptic genus to $\Q$-Gorenstein varieties with log-terminal singularities. With $X$ and $Z$ defined as above, the singular elliptic genus of $Z$ is defined by the formula:
\begin{align}\label{BL ellip genus}
Ell(Z;z,\tau)=\int_X\prod_{TX}\frac{{x_i}\vartheta(\twopi{x_i}-z)}
{\vartheta(\twopi{x_i})}\prod_{k\in I}
\frac{\vartheta(\twopi{D_k}-(a_k+1) z)\vartheta(z)}
{\vartheta(\twopi{D_k}-z)\vartheta((a_k+1) z)}
\end{align}
Here $x_i$ denote the formal Chern roots of the holomorphic tangent bundle $TX$. The independence of $Ell(Z;z,\tau)$ on the choice of a log resolution follows from the weak factorization theorem of Wlodarczyk \cite{W}. This generalization of the elliptic genus plays an essential role in Chin-Lung Wang's proof that $K$-equivalent varieties have the same elliptic genus \cite{CLW}, and in Borisov and Libgober's proof of the McKay correspondence for elliptic genera \cite{BL}. 

It is interesting to note that the above two approaches to defining smooth invariants of singular varieties, while coming from very different viewpoints, nevertheless fail for the same class of singularities, i.e., for the non-log-terminal ones. This is for the simple reason that if we allowed a discrepancy coefficient $a_k$ to be $-1$, we would have to divide by zero in either equations (\ref{E string}) or (\ref{BL ellip genus}). This paper began as an attempt to overcome this obstacle. In particular, we will show that we can define the singular elliptic genus for arbitrary normal surfaces. Partial work has been done in this area by Willem Veys \cite{Veys}, who extended Batyrev's stringy $E$-function for normal surfaces without purely log-canonical singularities. The singular elliptic genus defined in this paper generalizes Veys' stringy $\chi_y$ genus in this setting, as well as the singular elliptic genus of Borisov, Libgober, and Chin-Lung Wang. 
\begin{ack*}\rm
Much of the research for this paper was conducted during the author's stay in Zhejiang University, Hangzhou. The author wishes to thank Zhejiang University for its generous hospitality.
\end{ack*}
\section{Preliminaries}
\subsection{Theta function identities}
We will make use of the following properties of the Jacobi theta function 
$$\vartheta(t,\tau) = q^{\frac{1}{8}}2\sin\pi t
\prod_{n=1}^{\infty}(1-q^n)\prod_{n=1}^{\infty}(1-q^ne^{2\pi it})
(1-q^n e^{-2\pi it})$$ 
where $q=e^{2\pi i\tau}$. The action of $SL_2(\Z)$ on $\C\times\mathbb{H}$ given by 
\begin{align*}
(t,\tau) \mapsto (\frac{t}{c\tau+d},\frac{a\tau+b}{c\tau+d})
\end{align*}
induces a corresponding action on $\vartheta(t,\tau)$.
With respect to the generators of the $SL_2(\Z)$ action: $(t,\tau)\mapsto (t,\tau+1)$ and $(t,\tau)\mapsto(\frac{t}{\tau},\frac{-1}{\tau})$ we have the following transformation formulas for $\vartheta(t,\tau)$:
\begin{align}
\vartheta(t,\tau+1) &= e^{\pi i/4}\vartheta(t,\tau) \label{mod one}\\ 
\vartheta(\frac{t}{\tau},-\frac{1}{\tau}) &= \frac{1}{i}\sqrt{\frac{\tau}{i}}e^{\pi it^2/\tau}\vartheta(t,\tau) \label{mod two}
\end{align}
We will also make use of the translation rules:
\begin{align*}
\vartheta(t+1,\tau) &= -\vartheta(t,\tau)\\
\vartheta(t+\tau,\tau) &= -q^{-1/2}e^{-2\pi it}\vartheta(t,\tau)
\end{align*}
\subsection{The Elliptic Genus}
The elliptic genus $Ell(X;z,\tau)$ of a smooth complex $n$-manifold $X$ is defined as the index of the operator:
$$\dbar\otimes\bigotimes_{n=1}^{\infty}\Lambda_{-yq^{n-1}}T^{*}X\otimes
\Lambda_{-y^{-1}q^n}TX\otimes S_{q^{n}}T^{*}X\otimes S_{q^n}TX.$$
Here $TX$ is the holomorphic tangent bundle; $\Lambda_t(E)$ and $S_t(E)$ denote the formal sums of exterior and symmetric powers of $tE$; and $y,q$ are defined by $y=e^{2\pi iz}$ and $q=e^{2\pi i\tau}$. By Riemann-Roch, this index is given by the following integral involving the formal chern roots $x_i$ of $TX$:
$$\int_X \prod_{TX}\frac{x_i\vartheta(\twopi{x_i}-z,\tau)}{\vartheta(\twopi{x_i},\tau)}$$
One sees from the above expression that special values of the elliptic genus produce many interesting geometric invariants. For instance, $Ell(X;z,q=0) = y^{-n/2}\chi_{-y}(X)$, where $\chi_{-y}$ is the Hirzebruch $\chi_{-y}$ genus, and $Ell(X;\frac{1}{2},\tau)$ reproduces the signature.

If $D=\sum a_i D_i$ is a smooth divisor with simple normal crossings and coefficients $a_i\neq -1$, Borisov, Libgober, and Chin-Lung Wang have extended the notion of elliptic genus to that of the pair $(X,D)$ by setting $Ell(X,D;z,\tau)$ equal to the expression on the RHS of (\ref{BL ellip genus}) \cite{BLSing,CLW}. 

\subsection{Definitions from singularity theory}
We say that a singular variety $Z$ is Gorenstein (resp. $\Q$-Gorenstein) if $K_Z$ is Cartier (resp. $\Q$-Cartier). In general, this technical assumption is required in order to make sense of the pull-back of the canonical class of $Z$. However, if $Z$ is a normal surface, we can always make sense of the pull-back of $K_Z$ essentially using the adjunction formula.

Under these assumptions, let $f:X \rightarrow Z$ be a resolution of singularities. We say that $X$ is a log resolution if the exceptional components of the resolution are smooth divisors with simple normal crossings. Writing $K_X = f^*K_Z + \sum a_i D_i$, we say that the singularities of $Z$ are log-terminal (resp. log-canonical) if the discrepancy coefficients $a_i$ satisfy $a_i > -1$ (resp. $a_i \geq -1$). We say that the singularities of $Z$ are purely log-canonical if they are log-canonical and not log-terminal.

\section{Singular Elliptic Genus}
\subsection{Definitions and Notation}
We begin by setting up some notation. Let $X$ be a smooth surface and $C = \sum a_i C_i$ a smooth divisor on $X$ with simple normal crossings. We define a labeled graph $\Gamma(C)$ as follows: For each component $C_i$, we draw a vertex $v_i$. Whenever $C_i$ intersects $C_j$, we draw $C_iC_j$ edges connecting $v_i$ to $v_j$. Finally, we assign the label $(a_i,m_i,g_i)$ to the vertex $v_i$, where $a_i$ is the coefficient of $C_i$ in $C$, $m_i = -C_iC_i$, and $g_i$ is the genus of $C_i$. If $v_i$ has label $(-1,m_i,g_i)$, we refer to $v_i$ as a $-1$ \it{vertex}\rm. If a $-1$ vertex $v$ with genus $0$ is connected to exactly $2$ vertices $v_1$ and $v_2$ such that $a_1+a_2 = -2$, or if $v$ is connected to exactly one vertex $v_i$ with $a_i = -2$, we refer to $v$ as a \it{bridge}\rm. 

For $v_i$ a vertex with $a_i > -1$ (resp. $a_i < -1$), we say that $v_i$ is connected to a $-1$ vertex $v_j$ if there exists a path connecting $v_i$ to $v_j$ such that every vertex $v_k \neq v_j$ on the path has coefficient $a_k \geq a_i$ (resp. $a_k \leq a_i$) and genus $0$. For each coefficient $a\neq -1$ of a component of $C$, let $S_a$ denote the collection of vertices with label $(a,\cdot,\cdot)$ connected to a $-1$ vertex which is not a bridge. Let $R_a$ denote the collection of vertices in $S_a$ which are in addition connected to a bridge. Let $B_{a}$ denote the collection of bridges connecting vertices with labels $a$ and $-2-a$. Finally, let $B^{'}_a \subset B_a$ denote the collection of bridges connected to a vertex in $R_a$.

\begin{defn}\label{Def}
Let $C$ be a smooth divisor with simple normal crossings on $X$. Let $d$ denote the number of edges connecting two $-1$ vertices on the resolution graph. We define the elliptic genus $Ell(X,C;z,\tau)=$ 
\begin{align*}
&\int_X\prod_{i=1}^{2}\frac{{x_i}\vartheta(\twopi{x_i}-z)}
{\vartheta(\twopi{x_i})}
\prod_{a_j=-1}\frac{\vartheta(\twopi{C_j}+2z)\vartheta(z)}
{\vartheta(\twopi{C_j}+z)\vartheta(2 z)}
\prod_{a_k\neq-1}\frac{\vartheta(\twopi{C_k}-(a_k+1) z)\vartheta(z)}
{\vartheta(\twopi{C_k}-z)\vartheta((a_k+1) z)}+\\
&\sum_{a}(|S_a|-
\sum_{v_i\in B^{'}_a} m_i-1)\frac{\vartheta((a+2)z)\vartheta(a z)}{\vartheta((a+1)z)^2}+\\
&\sum_{a, v_i\in B_a}m_i\frac{\vartheta((a+2)z)\vartheta(a z)}{\vartheta((a+1)z)^2}+d\frac{\vartheta(3z)\vartheta(z)}{\vartheta(2z)^2}.
\end{align*}
\end{defn}
As in the above formula, we often write $\vartheta(t)$ for $\vartheta(t,\tau)$. For convenience, we also define for a pair $(X,C)$ the ``naive" elliptic genus $Ell_{nv}(X,C;z,\tau) =$
\begin{align*}
&\int_X\prod_{i=1}^{2}\frac{{x_i}\vartheta(\twopi{x_i}-z)}
{\vartheta(\twopi{x_i})}
\prod_{a_j=-1}\frac{\vartheta(\twopi{C_j}+2z)\vartheta(z)}
{\vartheta(\twopi{C_j}+z)\vartheta(2 z)}
\prod_{a_k\neq-1}\frac{\vartheta(\twopi{C_k}-(a_k+1) z)\vartheta(z)}
{\vartheta(\twopi{C_k}-z)\vartheta((a_k+1) z)}.
\end{align*}

Suppose that $Z$ is a normal surface, and let $f:X\rightarrow Z$ be a log resolution with $K_X = f^*K_Z + C$. We define the singular elliptic genus of $Z$ to be $Ell(X,C;z,\tau)$. More generally if $K_Z-\Delta$ has a well-defined pull-back, we define $Ell(Z,\Delta;z,\tau) = Ell(X,C;z,\tau)$ where this time $C$ is defined by $K_X = f^*(K_Z-\Delta) +C$. We will show in section \ref{birat inv} that $Ell(Z,\Delta;z,\tau)$ is independent of the choice of log resolution.

One could also define the singular elliptic genus of a normal surface $Z$ as follows: Choose a resolution $f:X\rightarrow Z$ with exceptional divisor $C = \sum a_i C_i$ and 
introduce a perturbation $C_{\varepsilon} = \sum\varepsilon b_iC_i$ so that $C+C_\varepsilon$ has no $-1$ coefficients. Define $\widehat{Ell}(Z;z,\tau)$ to be the limit as $\varepsilon\to 0$ of $Ell(X,C+C_\varepsilon;z,\tau)$ if the limit exists, where in this case $Ell(X,C+C_{\varepsilon};z,\tau)$ is the elliptic genus of $(X,C+C_\varepsilon)$ introduced by Borisov, Libgober, and Chin-Lung Wang. One way to
introduce such a perturbation is let $H$ be an ample divisor crossing through the singularities of $Z$, and define $C_\varepsilon$ by the relation $K_X = f^*(K_Z+\varepsilon H)+ C+C_\varepsilon$. This is the approach taken by Borisov and Libgober in \cite{BLSing}. In general, $\widehat{Ell}(Z;z,\tau)$ depends on the choice of the perturbation. In what follows, will show that for all normal singularities which are not purely log-canonical, and also for simple elliptic singularities, we can choose a perturbation so that the limit exists and corresponds to $Ell(Z;z,\tau)$ as given in definition \ref{Def}. Note that by the classification given in \cite{Aster}, these constitute all normal surface singularities except for $6$ types (cusp singularities, and quotients of cusp and elliptic singularities).

\begin{rmk}\rm
We will see that the definition \ref{Def} is chosen to ensure that $Ell(Z,\Delta;z,\tau)$ is invariant under the choice of birational model. We note however that there remains some ambiguity as to the proper way to count the contributions to $Ell(Z,\Delta;z,\tau)$ coming from vertices in $S_a$. For those cases in which we can compare $Ell(Z;z,\tau)$ to other invariants of normal surfaces, $|S_a| = 0$, so this ambiguity disappears.
\end{rmk}

\section{Birational Invariance of the Singular Elliptic Genus}\label{birat inv}
\begin{thm}
For $Z$ a normal surface, let either $\Delta = 0$ or $K_Z-\Delta$ be $\Q$-Cartier. Then $Ell(Z,\Delta;z,\tau)$ is independent of the choice of log resolution.
\end{thm}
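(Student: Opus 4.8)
The plan is to reduce the statement, via the weak factorization theorem of Włodarczyk, to checking invariance under a single blow-up at a point. Since any two log resolutions $f_1:X_1\to Z$ and $f_2:X_2\to Z$ are dominated by a common resolution obtained by a sequence of blow-ups along smooth centers (points, since we are on a surface) and blow-downs, and since $Ell(X,C;z,\tau)$ only depends on the pair $(X,C)$ with $C$ the discrepancy divisor, it suffices to prove: if $\pi:X'\to X$ is the blow-up of $X$ at a point $p$ with exceptional curve $E$, and $C' = \pi^*C + (a_E) E$ is the induced divisor (where $a_E$ depends on whether $p$ lies on zero, one, or two components of $C$, and with what coefficients), then $Ell(X',C';z,\tau) = Ell(X,C;z,\tau)$.

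**Next I would** organize the computation by cases according to the local picture at $p$. Case (a): $p\notin \mathrm{Supp}(C)$, so $a_E = 1$ and $E$ is a new component with label $(1,1,0)$ meeting nothing; Case (b): $p$ lies on exactly one component $C_i$ with coefficient $a_i$, so $a_E = a_i + 1$ and $E$ meets $C_i$ transversally once; Case (c): $p = C_i\cap C_j$ is a node, so $a_E = a_i + a_j + 1$ and $E$ meets both $C_i$ and $C_j$ once, while $C_i$ and $C_j$ no longer meet each other. In each case I must track three things: how the naive integral $Ell_{nv}$ changes (this is the Borisov–Libgober–Wang computation: for a blow-up with a divisor whose coefficients avoid $-1$, the theta-function integrand is designed so the integral is unchanged — the key local identity being the "push-forward" identity for $\pi_*$ of the extra theta factor attached to $E$), how the resolution graph $\Gamma(C)$ changes (new vertex, changed self-intersections $m_i\mapsto m_i+1$ for the components through $p$, removed/added edges), and how the correction terms — the sums over $S_a$, $B_a$, $B'_a$ and the $d$-term — change. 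The point of Definition \ref{Def} is precisely that when $a_E$ or one of the modified coefficients becomes $-1$ (making a factor in $Ell_{nv}$ blow up), the newly created $-1$ vertex is either a bridge (so its contribution is a pole that must be cancelled) or not, and the combinatorial correction terms are rigged to absorb exactly the discrepancy between the "correct" limiting value and the naive integral.

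**The main obstacle** will be the bookkeeping in the sub-cases where the blow-up creates or destroys a $-1$ vertex, a bridge, or an edge between two $-1$ vertices — for instance, blowing up a point on a $-1$ curve, or blowing up the node between a $-1$ curve and a $(-2)$ curve (which turns an existing $-1$ vertex into or out of being a bridge, since the bridge condition $a_1 + a_2 = -2$ and the genus/valence conditions are sensitive to exactly these moves). In those sub-cases the naive integral $Ell_{nv}$ genuinely jumps, or is ill-defined, and one must verify that the change in $|S_a| - \sum_{v_i\in B'_a} m_i - 1$, the change in $\sum_{v_i\in B_a} m_i$, and the change in $d$ together supply exactly the compensating term $\frac{\vartheta((a+2)z)\vartheta(az)}{\vartheta((a+1)z)^2}$ or $\frac{\vartheta(3z)\vartheta(z)}{\vartheta(2z)^2}$. **I would** handle this by first establishing a clean local lemma: computing $\pi_*$ of the local factor $\frac{x_E\vartheta(\tfrac{x_E}{2\pi i}-z)}{\vartheta(\tfrac{x_E}{2\pi i})}\cdot(\text{factor attached to }E)$ over the fiber $E\cong\Proj^1$ in terms of the coefficients of the curves through $p$, using the translation and $SL_2(\Z)$ rules for $\vartheta$ together with the fact that $\int_E$ of a power of $x_E$ picks out $E\cdot E = -1$; the residue of this expression at the bad locus (e.g. $a_i+1 = 0$ or $a_i + a_j + 1 = 0$) is what feeds into the correction. **Then** the theorem follows by checking, case by case and sub-case by sub-case, that the total expression is unchanged — the $\Delta = 0$ versus $K_Z - \Delta$ $\Q$-Cartier distinction plays no role here since in both situations $C$ is an honest $\Q$-divisor with simple normal crossings and the argument is purely about the pair $(X,C)$.
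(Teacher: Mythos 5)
Your proposal follows essentially the same route as the paper's proof: reduce to invariance under a single point blow-up with normal crossings relative to $C$, compute the change in the naive integral $Ell_{nv}$ as a residue/push-forward of a theta-function expression over the exceptional $E\cong\Proj^1$, and then check case by case that the combinatorial correction terms of Definition \ref{Def} (the $S_a$, $B_a$, $B'_a$ sums, the $m_i$ shifts, and the $d$-term) absorb exactly the resulting discrepancy $\frac{\vartheta((a+2)z)\vartheta(az)}{\vartheta((a+1)z)^2}$ or $\frac{\vartheta(3z)\vartheta(z)}{\vartheta(2z)^2}$. The paper organizes the verification into five cases according to the coefficients at the blown-up point and carries out the residue computations (using ellipticity of the local factor $F(t)$) and the graph bookkeeping that your outline defers, but the strategy and the identification of the delicate sub-cases coincide.
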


\begin{proof}
Let $(X,C) \rightarrow (Z,\Delta)$ be a log resolution. It suffices to prove that if $(\Bl{X},\Bl{C})$ is the blow-up of $(X,C)$ at a point with normal crossings with respect to $C$ then $Ell(X,C;z,\tau) = Ell(\Bl{X},\Bl{C};z,\tau)$. Let $f:\Bl{X}\rightarrow X$ be the blow-up map, with exceptional divisor $E$. Then $T\Bl{X}$ is stably equivalent to $f^*TX\oplus \mathcal{O}(-E)\oplus\mathcal{O}(-E)\oplus\mathcal{O}(E)$ (see \cite{Fulton}).

There are five cases to consider:

CASE $1$: $\Bl{X}$ is the blow up at $C_1\cap C_2$, where $C_1$ and $C_2$ have coefficients equal to $-1$ in $C$. Then $E$ has coefficient equal to $-1$ in $\Bl{C}$. We assume for notational simplicity that $C_1$ and $C_2$ are the only components of $C$. Then $Ell(\Bl{X},\Bl{C};z,\tau)=$
\begin{align*}
&\int_{\Bl{X}}f^*\bigg\{\prod_{i=1}^{2}\frac{{x_i}\vartheta(\twopi{x_i}-z)}
{\vartheta(\twopi{x_i})}\bigg\}
\bigg\{\frac{-E\vartheta(-E-z)\vartheta'(0)}{\vartheta(-E)\vartheta(-z)}\bigg\}^2\frac{E\vartheta(E-z)\vartheta'(0)}{\vartheta(E)\vartheta(-z)}\times\\
&\prod_{i=1}^2\frac{\vartheta(f^*C_i-E+2z)\vartheta(z)}{\vartheta(f^*C_i-E+z)\vartheta(2z)}\times\frac{\vartheta(E+2z)\vartheta(z)}{\vartheta(E+z)\vartheta(2z)} + 2\frac{\vartheta(3z)\vartheta(z)}{\vartheta(2z)^2}.
\end{align*}
Write the above expression as:
\begin{align*}
\int_{\Bl{X}}f^*\bigg\{\prod_{i=1}^{2}\frac{{x_i}\vartheta(\twopi{x_i}-z)}
{\vartheta(\twopi{x_i})}\bigg\}R(E,f^*C_1,f^*C_2)+2\frac{\vartheta(3z)\vartheta(z)}{\vartheta(2z)^2}.
\end{align*}
where $R(E,f^*C_1,f^*C_2) = 1 + R_1(f^*C_1,f^*C_2)E + R_2(f^*C_1,f^*C_2)E^2$. By the projection formula, the above integral evaluates to:
\begin{align*}
\int_{X}\prod_{i=1}^{2}\frac{{x_i}\vartheta(\twopi{x_i}-z)}
{\vartheta(\twopi{x_i})}\prod_{i=1}^2\frac{\vartheta(C_i+2z)\vartheta(z)}{\vartheta(C_i+z)\vartheta(2z)}-R_2(0,0)\frac{\vartheta(-z)^2}{\vartheta'(0)^2}
+2\frac{\vartheta(3z)\vartheta(z)}{\vartheta(2z)^2}.
\end{align*}
If we let $F(t) = t^{-3}R(t,0,0)$, then $R_2(0,0)$ is the residue of $F(t)$ at $t=0$. Note that $F(t) =$
\begin{align*}
\frac{\vartheta(t+z)\vartheta'(0)}{\vartheta(t)\vartheta(z)}
\frac{\vartheta(t+2z)\vartheta'(0)}{\vartheta(t)\vartheta(2z)}
\frac{\vartheta(t-2z)\vartheta'(0)}{\vartheta(t)\vartheta(-2z)}
\frac{\vartheta(t-2z)\vartheta(z)}{\vartheta(t-z)\vartheta(2z)}
\end{align*}
Using the transformation properties of the Jacobi theta function, one easily checks that $F(t)$ defines a meromorphic function on a complex torus. Thus, $\mathrm{Res}_{t=0}F(t) = -\mathrm{Res}_{t=z}F(t) = \big\{\frac{\vartheta'(0)}{\vartheta(z)}\big\}^2\frac{\vartheta(3z)\vartheta(z)}{\vartheta(2z)^2}$. We therefore have that $Ell(\Bl{X},\Bl{C};z,\tau)=$
\begin{align*}
\int_{X}\prod_{i=1}^{2}\frac{{x_i}\vartheta(\twopi{x_i}-z)}
{\vartheta(\twopi{x_i})}\prod_{i=1}^2\frac{\vartheta(C_i+2z)\vartheta(z)}{\vartheta(C_i+z)\vartheta(2z)}-\frac{\vartheta(3z)\vartheta(z)}{\vartheta(2z)^2}+2\frac{\vartheta(3z)\vartheta(z)}{\vartheta(2z)^2}
\end{align*}
which equals $Ell(X,C;z,\tau)$. Note that passing from $\Gamma(C)$ to $\Gamma(\Bl{C})$ leaves the $-1$-connectivity properties of $\Gamma(C)$ and the multiplicities of bridges of $\Gamma(C)$ unchanged. This justifies our assumption that $C_1$ and $C_2$ are the only components of $C$.

CASE $2$: $\Bl{X}$ is the blow-up at $C_1\cap C_2$ where $C_1$ has coefficient $-1$ in $C$ and $C_2$ has coefficient $a \neq -1$. One proceeds as in CASE $1$, replacing the factor 
\begin{align*}
\prod_{i=1}^2\frac{\vartheta(C_i+2z)\vartheta(z)}{\vartheta(C_i+z)\vartheta(2z)}
\end{align*}
with the factor
\begin{align*}
\frac{\vartheta(C_1+2z)\vartheta(z)}{\vartheta(C_1+z)\vartheta(2z)}
\frac{\vartheta(C_2-(a+1)z)\vartheta(-z)}{\vartheta(C_1-z)\vartheta((a+1)z)}
\end{align*}
in the definition of $Ell(\Bl{X},\Bl{C};z,\tau)$. By similar computations as above, we get that $Ell_{nv}(\Bl{X},\Bl{C};z,\tau) = Ell_{nv}(X,C;z,\tau)-\frac{\vartheta(z)^2}{\vartheta'(0)^2}\mathrm{Res}_{t=0} F(t)$ where $F(t)$ is given by:
\begin{align*}
&\bigg\{\frac{\vartheta(t+z)\vartheta'(0)}{\vartheta(t)\vartheta(z)}\bigg\}^2\frac{\vartheta(t-z)\vartheta'(0)}{\vartheta(t)\vartheta(-z)}\frac{\vartheta(-t+2z)\vartheta(z)}{\vartheta(-t+z)\vartheta(2z)}\times\\
&\frac{\vartheta(t+(a+1)z)\vartheta(z)}{\vartheta(t+z)\vartheta((a+1)z)}\frac{\vartheta(t-(a+1)z)\vartheta(z)}{\vartheta(t-z)\vartheta((a+1)z)}.
\end{align*}
One easily computes that $\mathrm{Res}_{t=0}F(t) = \frac{\vartheta((a+2)z)\vartheta(a z)}{\vartheta((a+1)z)^2}\frac{\vartheta'(0)^2}{\vartheta(z)^2}$. Hence:
\begin{align*}
Ell_{nv}(\Bl{X},\Bl{C};z,\tau)+\frac{\vartheta((a+2)z)\vartheta(a z)}{\vartheta((a+1)z)^2} = Ell_{nv}(X,C;z,\tau).
\end{align*}
To complete the proof, it therefore suffices to check that the extra summations that occur in definition \ref{Def} differ by $\frac{\vartheta((a+2)z)\vartheta(a z)}{\vartheta((a+1)z)^2}$ as we pass from $\Gamma(C)$ to $\Gamma(\Bl{C})$. There are two cases to consider: either the vertex corresponding to $C_1$ is a bridge, or it is a $-1$ vertex which is not a bridge. In the second case, it is easy to see that the cardinality of $S_a$ simply increases by $1$ as we pass from $\Gamma(C)$ to $\Gamma(\Bl{C})$, which finishes the claim. Otherwise, since the self intersection number of $C_1$ decreases by $1$ after blowing up, the expression $|S_a\backslash R_a|+|R_a|-\sum_{v_i\in B^{'}_a} m_i-1$ remains unchanged, while the sum $\sum_{v_i \in B_a} m_i$ increases by $1$. This completes the proof of the claim.

CASE $3$: $\Bl{X}$ is the blow-up at $C_1\cap C_2$ where $C_i$ has coefficient $a_i\neq -1$ and $a_1+a_2 = -2$. All computations are the same as in CASE $1$, except that now we replace the function $F(t)$ by:
\begin{align*}
\bigg\{\frac{\vartheta(t+z)\vartheta'(0)}{\vartheta(t)\vartheta(z)}\bigg\}^2\frac{\vartheta(t-z)\vartheta'(0)}{\vartheta(t)\vartheta(-z)}\frac{\vartheta(t+2z)\vartheta(z)}{\vartheta(t+z)\vartheta(2z)}
\prod_{i=1}^2\frac{\vartheta(t+(a_i+1)z)\vartheta(z)}{\vartheta(t+z)\vartheta((a_i+1)z)}.
\end{align*}
One easily computes $\mathrm{Res}_{t=0}F(t)=-\mathrm{Res}_{t=-z}F(t) = \prod_{i=1}^2\frac{\vartheta(a_iz)\vartheta'(0)}{\vartheta((1+a_i)z)\vartheta(z)}$.

Since $a_1+a_2 = -2$, the exceptional divisor of the blow-up contributes a bridge vertex $v$ to the graph $\Gamma(\Bl{C})$ with self intersection number $-m_v = -1$. Based on the above calculations, and the formula given in definition \ref{Def}, to complete the claim, it remains to show that the contribution to the elliptic genus:
\begin{align*}
\sum_{a}|S_a \backslash R_a|\frac{\vartheta((a+2)z)\vartheta(a z)}{\vartheta((a+1)z)^2}+\sum_{a}(|R_a|-\sum_{v_i\in B_a} m_i-1)\frac{\vartheta((a+2)z)\vartheta(a z)}{\vartheta((a+1)z)^2}
\end{align*}
is the same for both $\Gamma(C)$ and $\Gamma(\Bl{C})$. The only change in $\Gamma(\Bl{C})$ which could potentially change the above sum is if there exists vertex $w$ with label $a_i$ ($i=1,2$) which is in $S_{a_i}$ as a vertex in $\Gamma(C)$, but in $R_{a_i}$ as a vertex in $\Gamma(\Bl{C})$. In other words, $w$ ends up being connected to the bridge $v$ upon passing to $\Gamma(\Bl{C})$. In this case however, the contribution from $w$ is unchanged, since $m_v - 1 = 0$. Therefore the above sum is the same for both graphs.

CASE $4$: $\Bl{X}$ is the blow-up at a point $p \in C_1$, where $C_1$ has coefficient $-2$. This blow-up creates a special kind of bridge: one adjacent to a single vertex with label $-2$. Again, one proceeds as in CASE $1$, replacing $F(t)$ with:
\begin{align*}
\bigg\{\frac{\vartheta(t+z)\vartheta'(0)}{\vartheta(t)\vartheta(z)}\bigg\}^2\frac{\vartheta(t-z)\vartheta'(0)}{\vartheta(t)\vartheta(-z)}\frac{\vartheta(t+2z)\vartheta(z)}{\vartheta(t+z)\vartheta(2z)}
\frac{\vartheta(t-z)\vartheta(z)}{\vartheta(t+z)\vartheta(-z)}.
\end{align*}
This time $F(t)$ is a meromorphic function on a torus whose only pole occurs at $t=0$. It follows that $\mathrm{Res}_{t=0}F(t) = 0$. Thus, it remains to show that the extra summations associated to the combinatorics of $\Gamma(C)$ remain unchanged as we pass to $\Gamma(\Bl{C})$. But this is evident from the fact that there are no contributions to the summations in definition \ref{Def} coming from $-2$ vertices or their associated bridges, since $\frac{\vartheta((-2+2)z)\vartheta(-2z)}{\vartheta(-z)^2} = 0$.

CASE $5$: $\Bl{X}$ is the blow-up at $C_1\cap C_2$ where $C_i$ has coefficient $a_i\neq -1$ and $a_1+a_2 \neq -2$. One easily verifies, as in the proof of the change of variable formula for the elliptic genus \cite{BL,CLW} that $Ell_{nv}(\Bl{X},\Bl{C};z,\tau) = Ell_{nv}(X,C;z,\tau)$. Therefore, it suffices to show that the extra summations in definition \ref{Def} remain unchanged as we pass from $\Gamma(C)$ to $\Gamma(\Bl{C})$. Let $v_i$ denote the vertices of $\Gamma(C)$ corresponding to $C_i$. Clearly $\Gamma(\Bl{C})$ is obtained from $\Gamma(\Bl{C})$ by connecting $v_1$ and $v_2$ to a vertex $v_3$ with label $a_1+a_2+1$ and genus $0$. Assume $a_i$ are both greater than $-1$. Then $a_1+a_2+1 > a_i$, so the connectivity properties of the vertices of $\Gamma(C)$ are unchanged after inserting $v_3$. The same goes for the case when $a_i < -1$, for in this case $a_1+a_2 + 1 < a_i$. Finally, if the signs of $a_i$ differ, the connectivity properties of $\Gamma(C)$ remain the same after inserting $v_3$. This completes the proof.
\end{proof}

\begin{rmk}\rm
Suppose $Z$ is an $n$-dimensional $\Q$-Gorenstein variety which is not log-terminal. It may happen that $Z$ possesses a log resolution $f:X\rightarrow Z$ such that all discrepancy coefficients $a_i \neq -1$, even though some $a_i < -1$. One might therefore be tempted to define the elliptic genus of $Z$ using the Borisov-Libgober formula. The problem with this approach, as Borisov and Libgober point out in \cite{BLSing}, is proving that the definition is independent of the choice of log resolution. For example, if $Y\rightarrow Z$ is a different log resolution with discrepancy coefficients $b_i \neq -1$, it may happen that the only way to connect $X$ to $Y$ by a sequence of blow-ups and blow-downs (and thereby prove that the two definitions coincide) is to pass through a resolution possessing a $-1$ discrepancy. The above theorem shows (without relying on minimal models), that at least for the case of normal surfaces, such an approach gives an unambiguous definition. 
\end{rmk}

\section{Comparison With Other Invariants}
In this section we compare $Ell(Z;z,\tau)$ to some known invariants of singular normal surfaces.
\subsection{Simple Elliptic Singularities}
We say that $Z$ has a simple elliptic singularity if the singularity is obtained by collapsing a smooth genus $1$ curve to a point. One easily checks that such a singularity is purely log-canonical. Prior to the work in this paper, no definition existed even for the stringy Euler number of $Z$. In what follows, we will show that the ample divisor approach to defining $\widehat{Ell}(Z;z,\tau)$ is well-behaved, and that this definition coincides with $Ell(Z;z,\tau)$. Before proceeding, we will need the following technical lemma:
\begin{lem}\label{hol at zero}
For $j = 1,...,\ell$, let $c_j \in H^2(X)$ and $a_j \neq 0$. Then:
\begin{align*}
\int_X \prod_{i=1}^2\frac{{x_i}\vartheta(\twopi{x_i}-z)}
{\vartheta(\twopi{x_i})}\prod_{j=1}^{\ell}
\frac{\vartheta(\twopi{c_j}-a_j z)\vartheta(z)}
{\vartheta(\twopi{c_j}-z)\vartheta(a_j z)}
\end{align*}
is holomorphic at $z = 0$.
\end{lem}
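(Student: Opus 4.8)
The plan is to reduce the claim to a statement about the Laurent expansion in $z$ of the integrand, viewed as a cohomology-valued function on $X$, and then to show that the potentially singular contributions cancel. First I would expand each factor $\frac{x_i\vartheta(\twopi{x_i}-z)}{\vartheta(\twopi{x_i})}$ and $\frac{\vartheta(\twopi{c_j}-a_jz)\vartheta(z)}{\vartheta(\twopi{c_j}-z)\vartheta(a_jz)}$ as a Laurent series in $z$ whose coefficients lie in $H^{*}(X)$; since $X$ is a surface, only the terms up to degree $4$ (i.e.\ up to the coefficient of $x_i^2$, $x_ix_j$, $c_j^2$, $c_jc_k$, etc.) survive integration, so the whole integrand is a polynomial of bounded degree in the Chern roots with coefficients meromorphic in $z$. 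The only source of a pole at $z=0$ is the denominator $\prod_j\vartheta(a_jz)$, which has a zero of order $\ell$ there (using $\vartheta(0)=0$, $\vartheta'(0)\neq 0$); correspondingly $\prod_j\vartheta(z)$ in the numerator contributes a zero of order $\ell$. So each individual $j$-factor is in fact \emph{holomorphic} at $z=0$ — the issue is only that its value at $z=0$ is $1$ (the constant term), and its higher Taylor coefficients in $z$ involve $c_j$; no genuine pole is introduced by any single factor. Wait — that already shows holomorphy factor-by-factor, so the integral is holomorphic. Let me restructure.

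The clean approach: show that \emph{each factor separately} extends holomorphically across $z=0$, so the product does, and hence the integral does. For the tangent-bundle factors this is the classical fact that $\frac{x\vartheta(\twopi{x}-z)}{\vartheta(\twopi{x})}$ is holomorphic in $(x,z)$ near $(0,0)$ with value $x$ at $z=0$ — it appears already in the definition of the smooth elliptic genus via Riemann–Roch. For the divisor factors, I would write $\frac{\vartheta(\twopi{c_j}-a_jz)\vartheta(z)}{\vartheta(\twopi{c_j}-z)\vartheta(a_jz)}$ and observe that, since $a_j\neq 0$, both $\vartheta(z)$ and $\vartheta(a_jz)$ have simple zeros at $z=0$ with nonzero derivative, so the ratio $\frac{\vartheta(z)}{\vartheta(a_jz)}$ is holomorphic and nonvanishing at $z=0$ (value $1/a_j$); and $\frac{\vartheta(\twopi{c_j}-a_jz)}{\vartheta(\twopi{c_j}-z)}$ is holomorphic in $(c_j,z)$ near $(0,0)$ away from the zero locus of $\vartheta(\twopi{c_j})$ — and since $c_j$ is nilpotent in $H^{*}(X)$ we only need the germ at $c_j=0$, where $\vartheta(\twopi{c_j})$ has a simple zero, but that zero cancels between numerator and denominator too. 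Making this last point rigorous is the one place requiring care: one should expand $\vartheta(\twopi{c_j}-a_jz) = \vartheta(\twopi{c_j}) - a_jz\,\vartheta'(\twopi{c_j}) + \cdots$ and $\vartheta(\twopi{c_j}-z)$ similarly, factor out $\vartheta(\twopi{c_j})$ (a simple zero in the formal variable $c_j/2\pi i$), and check that the resulting ratio is a power series in $c_j$ and $z$ jointly with no negative powers of $z$.

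Thus the key steps, in order, are: (1) reduce to showing each factor of the integrand is holomorphic at $z=0$, since holomorphy is preserved under products and under $\int_X$; (2) dispatch the tangent-bundle factors by the standard argument underlying the smooth elliptic genus; (3) for each divisor factor with $a_j\neq 0$, use the simple zeros of $\vartheta(z)$ and $\vartheta(a_jz)$ at $z=0$ to see $\vartheta(z)/\vartheta(a_jz)$ is holomorphic and nonzero there, and then show $\vartheta(\twopi{c_j}-a_jz)/\vartheta(\twopi{c_j}-z)$ has a holomorphic expansion in $z$ with $H^{*}(X)$-coefficients by expanding around $c_j=0$ and cancelling the common simple zero coming from $\vartheta(\twopi{c_j})$. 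The main obstacle is step (3), specifically handling the factor $\vartheta(\twopi{c_j}-a_jz)/\vartheta(\twopi{c_j}-z)$ at the point $(c_j,z)=(0,0)$ where \emph{both} $\vartheta$-arguments vanish: one must verify that the joint expansion has no $1/z$ terms, which amounts to checking that the leading behavior is governed by $\frac{-a_jz+\,\text{(terms in }c_j)}{-z+\,\text{(terms in }c_j)}$ — manifestly regular in $z$ — rather than producing a spurious pole; the hypothesis $a_j\neq 0$ is exactly what rules out degeneracy of the numerator's $z$-linear term.
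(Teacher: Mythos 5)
Your reduction to factor-by-factor holomorphy fails, and it fails exactly at the point you flagged as the delicate one. Since $c_j\in H^2(X)$ is nilpotent, the divisor factor has to be expanded in powers of $u=\twopi{c_j}$ with coefficients that are functions of $z$, and a direct computation (using $\vartheta(-t)=-\vartheta(t)$, $\vartheta'(-t)=\vartheta'(t)$) gives
\[
\frac{\vartheta(u-a_jz)\vartheta(z)}{\vartheta(u-z)\vartheta(a_jz)}
=1+\Bigl(\frac{\vartheta'(z)}{\vartheta(z)}-\frac{\vartheta'(a_jz)}{\vartheta(a_jz)}\Bigr)u+O(u^2),
\]
whose $u$-coefficient has a simple pole at $z=0$ with residue $1-\tfrac{1}{a_j}$, nonzero unless $a_j=1$ (and the $u^2$-coefficient can carry a double pole). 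Your own toy model shows the same thing once you actually expand it in the nilpotent variable: $\frac{-a_jz+c}{-z+c}=a_j+(a_j-1)\bigl(\tfrac{c}{z}+\tfrac{c^2}{z^2}\bigr)$ on a surface, which is not regular in $z$; the hypothesis $a_j\neq 0$ does not rule this out (you would need $a_j=1$). Likewise your first-paragraph claim that the only possible source of a pole is $\prod_j\vartheta(a_jz)$ overlooks that inverting $\vartheta(\twopi{c_j}-z)$ produces negative powers of $\vartheta(z)$ against positive powers of $c_j$. So individual divisor factors are genuinely singular at $z=0$ in their positive cohomological degrees, and ``holomorphy passes to products and to $\int_X$'' cannot be invoked.

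The lemma is true because of a cancellation \emph{across} factors that your argument never uses. The degree-$0$ part of each tangent factor is $\lim_{x_i\to 0}\frac{x_i\vartheta(\twopi{x_i}-z)}{\vartheta(\twopi{x_i})}=\frac{2\pi i\,\vartheta(-z)}{\vartheta'(0)}$, which vanishes at $z=0$; hence the product over $i=1,2$ has a double zero at $z=0$ in degree $0$ and a simple zero in degree $2$. Since $\int_X$ only sees the degree-$4$ component of the integrand, the dangerous contributions are (tangent degree $2$)$\times$(divisor degree $2$: simple pole) and (tangent degree $0$)$\times$(divisor degree $4$: pole of order at most $2$), and in each the zero exactly compensates the pole, so the top-degree component is regular at $z=0$. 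This is the mechanism the paper exploits: it substitutes $y=e^{2\pi i z}$, writes the integrand as an elementary (Weierstrass-factor) part times a class $G(y,\tau)$ with $G\to 1$ as $y\to 1$, and checks pole orders at $y=1$ degree by degree. The essential inputs are the vanishing of $\vartheta(z)$ (equivalently of $1-y$) in the low-degree parts of the tangent factors together with the fact that only the top-degree term is integrated --- neither of which appears in a factor-by-factor argument, so the proposal as it stands does not prove the lemma.
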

\begin{proof}
We may rewrite the above expression as:
\begin{align*}
\int_X \prod_{i=1}^2\frac{x_i(1-ye^{-x_i})}{1-e^{-x_i}}
\prod_{j=1}^{\ell}\frac{1-y^{a_j}e^{-c_j}}{1-ye^{-c_j}}
\frac{1-y}{1-y^{a_j}}G(y,\tau)
\end{align*}
Here $y = e^{2\pi i z}$ and $G(y,\tau)$ is a cohomology class whose limit is equal to $1$ as $y \to 1$. By direct computation, one may verify that the integrand in front of $G(y,\tau)$ has a pole of order $1$ at $y=1$ in front of its cohomological degree $2$ term, and no other poles at $y=1$. Since $\lim_{y\to 1}G(y,\tau) = 1$, $G(y,\tau)$ must have a zero at $y=1$ in front of its degree $2$ term. It follows that the limit as $y\to 1$ of the entire integral exists.
\end{proof}

\begin{prop}\label{simple elliptic proof}
Let $Z$ be a Gorenstein projective variety with a simple elliptic singularity. Let $H$ be an ample divisor on $Z$ crossing through the singular point. Define $\widehat{Ell}(Z;z,\tau)$ by taking the limit as $\varepsilon\to 0$ of the perturbed elliptic genus $Ell(Z,-\varepsilon H;z,\tau)$. Then $\widehat{Ell}(Z;z,\tau)$ is well-defined and $\widehat{Ell}(Z;z,\tau) = Ell(Z;z,\tau)$.
\end{prop}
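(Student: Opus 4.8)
The plan is to set up an explicit log resolution of the simple elliptic singularity, compute both sides, and match them term by term.

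\medskip

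\noindent\textbf{Step 1: Choose a good resolution.} Let $f: X \to Z$ be the minimal resolution whose exceptional divisor is a single smooth genus $1$ curve $E$ with self-intersection $E^2 = -m$ for some $m > 0$. Since $Z$ is Gorenstein with a simple elliptic singularity, the discrepancy of $E$ is $0$, i.e.\ $K_X = f^*K_Z$ and $C = 0$ in the notation of Definition \ref{Def}. To compute $Ell(Z;z,\tau)$ directly from Definition \ref{Def}, note that $\Gamma(C)$ has a single vertex with label $(0, m, 1)$; since this is not a $-1$ vertex and there are no bridges and no edges between $-1$ vertices, the extra summations and the $d$-term all vanish, so $Ell(Z;z,\tau) = Ell_{nv}(X,0;z,\tau) = \int_X \prod_{i=1}^2 \frac{x_i \vartheta(\twopi{x_i}-z)}{\vartheta(\twopi{x_i})}$, which is just the ordinary elliptic genus of the smooth surface $X$.

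\medskip

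\noindent\textbf{Step 2: Identify the perturbed divisor.} Now let $H$ be ample on $Z$ through the singular point. Pulling back, $f^*H = \widetilde{H} + t E$ where $\widetilde{H}$ is the strict transform and $t = (f^*H \cdot E)/m > 0$ is rational and strictly positive because $H$ passes through the singular point. Then $K_X = f^*(K_Z - (-\varepsilon H)) + C_\varepsilon$ forces $C_\varepsilon = -\varepsilon f^* H = -\varepsilon \widetilde H - \varepsilon t E$; in particular the coefficient of $E$ in $C_\varepsilon$ is $-\varepsilon t \neq -1$ for small $\varepsilon$, and (after possibly further blowing up to make $C_\varepsilon$ simple normal crossings) all coefficients avoid $-1$. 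So $Ell(Z,-\varepsilon H;z,\tau) = Ell(X, C_\varepsilon;z,\tau)$ is computed by the Borisov--Libgober formula with no $-1$ vertices, hence equals $Ell_{nv}(X,C_\varepsilon;z,\tau)$ — the combinatorial correction terms are empty.

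\medskip

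\noindent\textbf{Step 3: Take the limit and invoke Lemma \ref{hol at zero}.} The integrand of $Ell_{nv}(X,C_\varepsilon;z,\tau)$ is
\[
\prod_{i=1}^2\frac{x_i \vartheta(\twopi{x_i}-z)}{\vartheta(\twopi{x_i})}\cdot \frac{\vartheta(\twopi{E}\cdot(-\varepsilon t) \text{-shift} + \varepsilon t z)\vartheta(z)}{\vartheta(\ldots -z)\vartheta(-\varepsilon t z)}\cdot(\text{factor for }\widetilde H)\cdot(\text{factors for extra exceptional divisors}).
\]
As $\varepsilon \to 0$ the exponents $a_k \varepsilon \to 0$, and the apparent obstruction is the factor $1/\vartheta(a_k z)$ which blows up at $z$ fixed as $a_k \to 0$. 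The point is to rewrite each problematic factor $\frac{\vartheta(\twopi{c}-a z)\vartheta(z)}{\vartheta(\twopi{c}-z)\vartheta(a z)}$ in the form appearing in Lemma \ref{hol at zero} and observe that, after expanding in Chern roots on a surface (so only cohomological degrees $\le 2$ survive), the limit $\varepsilon \to 0$ exists and equals $1$ for each such factor: as $a \to 0$, $\vartheta(\twopi c - a z)/\vartheta(a z) \to \vartheta(\twopi c)/(\text{pole})$, but this is multiplied by $\vartheta(z)/\vartheta(\twopi c - z)$ and the surviving degree-$\le 2$ contributions conspire, by exactly the mechanism in the proof of Lemma \ref{hol at zero} (the degree-$2$ part of the relevant cohomology class vanishes to the right order), to leave $1$. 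Thus $\lim_{\varepsilon\to 0}Ell_{nv}(X,C_\varepsilon;z,\tau) = \int_X \prod_{i=1}^2\frac{x_i\vartheta(\twopi{x_i}-z)}{\vartheta(\twopi{x_i})} = Ell(Z;z,\tau)$, giving both well-definedness and the claimed equality.

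\medskip

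\noindent\textbf{Main obstacle.} The delicate part is Step 3: showing the $\varepsilon \to 0$ limit of $Ell_{nv}(X,C_\varepsilon)$ exists \emph{and} picks out precisely the $C=0$ integrand, with no leftover boundary contributions from the strict transform $\widetilde H$ or from the extra blow-ups needed to achieve simple normal crossings. This requires a careful bookkeeping of which cohomological degrees contribute on a surface and verifying that every factor associated to a divisor whose coefficient tends to $0$ contributes $1$ in the limit — which is where Lemma \ref{hol at zero} does the real work, applied with the $c_j$ being the various exceptional and strict-transform classes and the $a_j$ being (up to sign) $\varepsilon t$ and the discrepancies of the auxiliary blow-ups.
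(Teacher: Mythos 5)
Your Step 1 contains a fatal error that the rest of the argument inherits. For a simple elliptic singularity the discrepancy of the exceptional elliptic curve $E$ is $-1$, not $0$: by adjunction $2g(E)-2=0=K_X\cdot E+E^2$, so $K_X\cdot E=m$, and dotting $K_X=\pi^*K_Z+aE$ with $E$ (using $\pi^*K_Z\cdot E=0$) gives $m=-am$, i.e.\ $a=-1$. This is exactly why the singularity is \emph{purely log-canonical} and why the proposition is nontrivial. Consequently $Ell(Z;z,\tau)$ from Definition \ref{Def} is not the ordinary elliptic genus of the resolution $X$; it is
$\int_X\prod_{i=1}^2\frac{x_i\vartheta(\twopi{x_i}-z)}{\vartheta(\twopi{x_i})}\,\frac{\vartheta(\twopi{E}+2z)\vartheta(z)}{\vartheta(\twopi{E}+z)\vartheta(2z)}$,
with the special $-1$-vertex factor present. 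Likewise in Step 2 the coefficient of $E$ in the perturbed divisor is $-1+O(\varepsilon)$, not $-\varepsilon t$, so the Borisov--Libgober factor for $E$ carries $\vartheta(m\varepsilon z)$ in its denominator and the integrand blows up as $\varepsilon\to 0$. The existence of the limit is therefore a genuine issue: it holds because $\lim_{\varepsilon\to 0}\varepsilon\,Ell(Z,-\varepsilon H;z,\tau)=\frac{1}{m}Ell(E;z,\tau)=0$, i.e.\ because the elliptic genus of a genus-one curve vanishes. Lemma \ref{hol at zero} cannot do this job --- it asserts holomorphy in $z$ at $z=0$ for fixed nonzero coefficients $a_j$, and says nothing about limits in the perturbation parameter; your claim in Step 3 that each factor with coefficient tending to a bad value ``contributes $1$ in the limit'' is precisely what fails here.

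Even after correcting the discrepancy, the termwise-matching strategy does not establish $\widehat{Ell}(Z;z,\tau)=Ell(Z;z,\tau)$: the limit $G(z,\tau)$ of the perturbed genus and the Definition \ref{Def} expression are not visibly equal integrand by integrand. The paper's route is global: it shows both quantities are holomorphic on $\C\times\mathbb{H}$ (here Lemma \ref{hol at zero} is actually used, for the behavior at $z=0$, together with the theta translation rules at the half-periods, where vanishing again comes from $E$ being a torus), then shows the difference $F=Ell(Z)-G$ transforms as a weak Jacobi form of weight $0$ and index $1$ under the full Jacobi group, hence equals $c\,\phi_{0,1}$ by the Eichler--Zagier uniqueness, and finally $c=0$ because the $\chi_0$-type limit of $F$ vanishes while that of the $K3$ genus $\phi_{0,1}$ is $2$. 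None of this modular machinery, nor the key vanishing $Ell(E;z,\tau)=0$, appears in your proposal, so the gap is not just bookkeeping: both the well-definedness and the equality require arguments your outline does not supply.
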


\begin{proof}
Let $\pi: X \rightarrow Z$ be the resolution of singularities obtained by replacing the singular point of $Z$ with an elliptic curve $E$. For simplicity, we may assume that the proper transform of $H$ (which we will also denote by $H$), is a smooth curve which intersects $E$ transversally.

Since $\pi^*K_Z = K_X + E$, we have that $\pi^*(K_Z -\varepsilon H) = K_X + (1-m\varepsilon)E - \varepsilon H$. Therefore ${Ell}(Z,-\varepsilon H;z,\tau)$ is equal to:
\begin{align*}
\int_X \prod_{i=1}^2\frac{{x_i}\vartheta(\twopi{x_i}-z)}
{\vartheta(\twopi{x_i})}\frac{\vartheta(\twopi{E}-m\varepsilon z)
\vartheta(z)}{\vartheta(\twopi{E}-z)\vartheta(m\varepsilon z)}
\frac{\vartheta(\twopi{H}-(1+\varepsilon)z)
\vartheta(z)}{\vartheta(\twopi{H}-z)\vartheta((1+\varepsilon)z)}.
\end{align*}
Here $x_i$ denote the formal Chern roots of $TX$. From the above expression, $Ell(Z,-\varepsilon H;z,\tau)$ is evidently a meromorphic function in the variable $\varepsilon$. Thus, $\lim_{\varepsilon\to 0} Ell(Z,-\varepsilon H;z,\tau)$ exists iff $\lim_{\varepsilon\to 0} \varepsilon Ell(Z,-\varepsilon H;z,\tau) = 0$. By inspection, this second limit is easily seen to be equal to:
\begin{align*}
\frac{1}{m}\int_X \prod_{i=1}^2\frac{{x_i}\vartheta(\twopi{x_i}-z)\vartheta(\twopi{E})}{\vartheta(\twopi{x_i})\vartheta(\twopi{E}-z)}
 = \frac{1}{m}Ell(E;z,\tau) = 0.
\end{align*}
Therefore $\lim_{\varepsilon\to 0} Ell(Z,-\varepsilon H;z,\tau)$ exists. Call this limit $G(z,\tau)$. Define $F(z,\tau) = Ell(Z;z,\tau) - G(z,\tau)$.

CLAIM: $F(z,\tau)$ is holomorphic for $(z,\tau) \in \C\times\mathbb{H}$. We first claim that $Ell(Z;z,\tau)$ is holomorphic. By definition,
\begin{align*}
Ell(Z;z,\tau) = \int_X \prod_{i=1}^2\frac{{x_i}\vartheta(\twopi{x_i}-z)}
{\vartheta(\twopi{x_i})}
\frac{\vartheta(\twopi{E}+2z)\vartheta(z)}
{\vartheta(\twopi{E}+z)\vartheta(2 z)}.
\end{align*}
The possible poles of $Ell(Z;z,\tau)$ occur at the points of the lattice $\frac{1}{2}\Z + \frac{\tau}{2}\Z$. By lemma \ref{hol at zero}, $Ell(Z;z,\tau)$ is holomorphic at $z = 0$. We check that $Ell(Z;z,\tau)$ has no pole at $\frac{\tau}{2}$; the remaining cases are similar. It suffices to show that $\lim_{z\to\frac{\tau}{2}}\vartheta(2z)Ell(Z;z,\tau) = 0$. Evaluating this limit and using the translation rules of the Jacobi theta-function gives the expression:
$$\int_{E} \frac{E\vartheta(\twopi{E}-\frac{\tau}{2})}{\vartheta(\twopi{E})}
\vartheta(\frac{\tau}{2}),$$
which is zero since $E \cong T^2$. Thus, $Ell(Z;z,\tau)$ is holomorphic.
To finish the claim, it therefore suffices to prove that $G(z,\tau)$ is holomorphic in $(z,\tau)$. Fix $\tau \in \mathbb{H}$ and define the function $g(\varepsilon,z) = Ell(Z,-\varepsilon H;z,\tau)\vartheta(m\varepsilon z)\vartheta((1+\varepsilon)z)$. It is clear by lemma \ref{hol at zero} and the transformation properties of theta functions that $g(\varepsilon,z) = \vartheta(z)^2\tilde{g}(\varepsilon,z)$, where $\tilde{g}$ is holomorphic, with $\tilde{g}(0,z) = 0$. Hence 
$$\lim_{\varepsilon\to 0}Ell(Z,-\varepsilon H;z,\tau) = \frac{\vartheta(z)\tilde{g}_{\varepsilon}(0,z)}{mz\vartheta'(0)}$$ 
is holomorphic in $z$.

CLAIM: $F(z,\tau)$ is a weak Jacobi form of weight $0$ and index $1$. Let us first examine how $F(z,\tau)$ transforms under the action of $SL_2(\Z)$. It suffices to check the cases for the generators of the $SL_2(\Z)$-action $(z,\tau)\mapsto (z,\tau+1)$ and $(z,\tau)\mapsto (\frac{z}{\tau},-\frac{1}{\tau})$. The identity $F(z,\tau+1) = e^{\pi i/2}F(z,\tau)$ is trivial. 
To verify that $F(\frac{z}{\tau},-\frac{1}{\tau}) = e^{2\pi iz^2/\tau}F(z,\tau)$ we first calculate how $G(z,\tau)$ transforms.
Since we are only interested in the degree $2$ term of the integrand of $Ell(Z;z,\tau,-\varepsilon H)$, we have that: 
\begin{align*}
&Ell(Z;\frac{z}{\tau},\frac{-1}{\tau},-\varepsilon H) = \\
&\int_X \prod_{i=1}^2\frac{x_i\vartheta(\frac{\twopi{x_i}-z}{\tau},-\frac{1}{\tau})}
{\vartheta(\frac{x_i}{2\pi i\tau},-\frac{1}{\tau})}\frac{\vartheta(\frac{\twopi{E}-m\varepsilon z}{\tau},-\frac{1}{\tau})
\vartheta(\frac{z}{\tau},-\frac{1}{\tau})}{\vartheta(\frac{\twopi{E}-z}{\tau},-\frac{1}{\tau})\vartheta(\frac{m\varepsilon z}{\tau},-\frac{1}{\tau})}
\frac{\vartheta(\frac{\twopi{H}-(1+\varepsilon)z}{\tau},-\frac{1}{\tau})
\vartheta(\frac{z}{\tau},-\frac{1}{\tau})}{\vartheta(\frac{\twopi{H}-z}{\tau},-\frac{1}{\tau})\vartheta(\frac{(1+\varepsilon)z}{\tau},-\frac{1}{\tau})}.
\end{align*}
By the transformation formula (\ref{mod two}) for the Jacobi theta function, this expression equals:
\begin{align*}
&e^{2\pi i z^2/\tau}\int_X e^{K_X+(1-m\varepsilon)E-\varepsilon H} \prod_{i=1}^2\frac{{x_i}\vartheta(\twopi{x_i}-z,\tau)}
{\vartheta(\twopi{x_i},\tau)}\frac{\vartheta(\twopi{E}-m\varepsilon z,\tau)
\vartheta(z,\tau)}{\vartheta(\twopi{E}-z,\tau)\vartheta(m\varepsilon z,\tau)}\times\\
&\frac{\vartheta(\twopi{H}-(1+\varepsilon)z,\tau)
\vartheta(z,\tau)}{\vartheta(\twopi{H}-z,\tau)\vartheta((1+\varepsilon)z,\tau)}.
\end{align*}
Expanding $e^{K_X+(1-m\varepsilon)E-\varepsilon H}$ and using $(K_X+(1-m\varepsilon)E-\varepsilon H)E = \pi^*(K_Z-\varepsilon H)E = 0$, we may rewrite the above expression as:
\begin{align*}
&\int_X (1-e^{K_X+(1-m\varepsilon)E-\varepsilon H}) \prod_{i=1}^2\frac{{x_i}\vartheta(\twopi{x_i}-z,\tau)}{\vartheta(\twopi{x_i})}
\frac{\vartheta(\twopi{H}-(1+\varepsilon)z,\tau)
\vartheta(z,\tau)}{\vartheta(\twopi{H}-z,\tau)\vartheta((1+\varepsilon)z,\tau)}\\
&+e^{2\pi i z^2/\tau}Ell(Z;z,\tau,-\varepsilon H)
\end{align*}
Taking the limit as $\varepsilon\to 0$ we get that $G(\frac{t}{\tau},-\frac{1}{\tau})$ is equal to:
\begin{align*}
\int_X (1-e^{K_X+E})\prod_{i=1}^2\frac{{x_i}\vartheta(\twopi{x_i}-z,\tau)}{\vartheta(\twopi{x_i})} + e^{2\pi iz^2/\tau}G(t,\tau).
\end{align*}
Now an analogous calculation shows that $Ell(Z;\frac{z}{\tau},-\frac{1}{\tau})=$
$$\int_X (1-e^{K_X+E})\prod_{i=1}^2\frac{{x_i}\vartheta(\twopi{x_i}-z,\tau)}{\vartheta(\twopi{x_i})} + e^{2\pi iz^2/\tau}Ell(Z;z,\tau).$$
For this case, we are using $(K_X+E)E = K_E[E] = 0$. It follows that $F(\frac{z}{\tau},-\frac{1}{\tau}) = e^{2\pi iz^2/\tau}F(z,\tau)$. The verification of the lattice transformation formula: $F(z+a+b\tau,\tau) = e^{-2\pi i(b^2\tau+2bz)}F(z,\tau)$ proceeds along similar lines and is left to the reader.

It follows that $F(z,\tau)$ transforms as a weak Jacobi form of weight $0$ and index $1$ under the full Jacobi group. Up to a constant, there is only one weak Jacobi form $\phi_{0,1}$ of this weight and index \cite{EZ}. Even without verifying that $F(z,\tau)$ has a Fourier expansion (and hence, is a genuine weak Jacobi form), it is straight-forward to prove that $F(z,\tau) = c\phi_{0,1}(z,\tau)$: For each $\tau$, the quotient $\frac{\phi_{0,1}}{F}$ defines a meromorphic function on the torus $E_\tau$. If $\frac{\phi_{0,1}}{F}$ is non-constant for generic $\tau$, then $\frac{\phi_{0,1}}{F}$ vanishes precisely at the zeros of $\phi_{0,1}$, which are the zeros of the Weierstrass $\mathfrak{p}$ function. Therefore, we must have $\frac{\phi_{0,1}}{F} = \mathfrak{p}(z,\tau)$, which contradicts the fact that $\mathfrak{p}$ has a nontrivial weight under the action of the Jacobi group. Thus, $\frac{\phi_{0,1}}{F}$ is a modular form invariant under the modular group, and is therefore constant.

Finally, one easily computes $\lim_{z\to i\infty}\lim_{\tau\to i\infty}e^{2\pi iz} F(z,\tau) = \chi_0(X)-\chi_0(X) = 0$. However, up to a constant, $\phi_{0,1}$ is the elliptic genus of a $K3$ surface. Since $\lim_{z\to i\infty}\lim_{\tau\to i\infty}e^{2\pi iz}Ell(K3;z,\tau) =\chi_0(K3) = 2$, we must have that $c = 0$. This completes the proof.
\end{proof}

\subsection{Comparison with Veys' stringy $E$-function}
We again recall the definition of Batyrev'stringy $E$-function for $\Q$-Gorenstein varieties $Z$ with log-terminal singularities. Let $f:X\rightarrow Z$ be a log resolution with $K_X = f^*K_Z + \sum_{I} a_i D_i$. The log-terminality condition implies that $a_i > -1$. The stringy $E$-function of $Z$ is then given by the expression:
\begin{align*}
E_{str}(Z;u,v) = \sum_{J\subset I}E(D_{J}^{o};u,v)\prod_{J}
\frac{uv-1}{(uv)^{a_i+1}-1}. 
\end{align*}
Here $D_{J}^{o} = \bigcap_{J}D_j\backslash \bigcup_{J^{c}}D_i$.

We restrict our attention to normal surfaces $Z$. Let $f:X \rightarrow Z$ be a log resolution. Veys made the following observation \cite{Veys}: Suppose the exceptional curve of the log resolution contains a component $C \cong \Proj^1$ which intersects, with multiplicity one, exactly two curves $C_1$ and $C_2$. Let $C_i$ have coefficient $a_i$ and let $C$ have coefficient $a$. Notice first that since $K_X = f^*K_Z + a_1C_1+a_2C_2 + aC +...$, we get, after multiplying both sides by $C$, that $a_1+a_2+2 = m(a+1)$, where $m=-C\cdot C$. Now the contribution from $C$ to the stringy $E$-function of $Z$ is given by:
\begin{align*}
\frac{(L-1)^2}{(L^{a_1+1}-1)(L^{a+1}-1)}+
\frac{(L-1)^2}{(L^{a_2+1}-1)(L^{a+1}-1)}+
(L-1)\frac{(L-1)}{(L^{a+1}-1)}
\end{align*}
where for convenience we have substituted $L$ for $uv$. Using $a_1+a_2+2 = m(a+1)$, this expression simplifies to:
\begin{align*}
&\frac{(L-1)^2(L^{m(a+1)}-1)}{(L^{a+1}-1)(L^{a_1+1}-1)(L^{a_2+1}-1)}
= \frac{(L-1)^2(L^{(m-1)(a+1)}+...+1)}{(L^{a_1+1}-1)(L^{a_2+1}-1)}.
\end{align*}
In particular, the above expression makes sense even when $a=-1$. In this case, the contribution from $C$ to the stringy $E$-function is given by
\begin{align}\label{Veys term}
\frac{m(L-1)^2}{(L^{a_1+1}-1)(L^{a_2+1}-1)}.
\end{align}
A similar analysis applies to the case when $C$ intersects, with multiplicity one a single curve $C_1$ with coefficient $a_1$. In this case, the contribution from $C$ to the stringy $E$-function is obtained from the above formula by setting $a_2 = 0$.

Suppose then that our log resolution $f:X\rightarrow Z$ has the property that every curve $C$ in the exceptional locus $\bigcup_I C_i$ with coefficient $-1$ has genus $0$ and intersects either one or two exceptional curves with multiplicity one. Then, motivated by the above observation, Veys defines the stringy $E$ function of $Z$ to as follows: Let $S \subset I$ denote the subset of indices such that $a_i = -1$. For $i\in S$, let $m_i = -C_i\cdot C_i$ and $a_{i_k}$ ($k=1,2$) the coefficients of the divisors $C_{i_k}$ that intersect $C_i$. Then define the stringy $E$-function $E_{str}(Z;u,v)$ as:
\begin{align*}
\sum_{J\subset I\backslash S}E(C_{J}^{o};u,v)\prod_{J}
\frac{uv-1}{(uv)^{a_i+1}-1} +\sum_{S}
\frac{m_i(uv-1)^2}{((uv)^{a_{i_1}+1}-1)((uv)^{a_{i_2}+1}-1)}.
\end{align*}
We refer to the specialization $E_{str}(Z;u,1)$ as the stringy $\chi_y$ genus of Batyrev-Veys.

Note that from the relation $m_i(a_i+1) = a_{i_1}+a_{i_2}+2$, if $a_i = -1$ then $a_{i_1}+a_{i_2} = -2$. Thus, the vertex corresponding to $C_i$ in the resolution graph of $X$ is a bridge. We may therefore interpret the condition for Veys' stringy $E$-function to be defined as saying that the resolution graph of $X$ contains only bridge $-1$ vertices, which connect at most once to each of their adjacent vertices.
\begin{thm}\label{all equal}
Let $Z$ be a normal surface without strictly log-canonical singularities. Then $\widehat{Ell}(Z;z,\tau)$ has a (possibly not unique) limit for any perturbation of the exceptional divisor of a log resolution. Moreover we can choose a perturbation so that $\widehat{Ell}(Z;z,\tau) = Ell(Z;z,\tau)$. Finally, the stringy $\chi_y$ genus defined by $Ell(Z;z,\tau)$ corresponds to the stringy $\chi_y$ genus of Batyrev-Veys.
\end{thm}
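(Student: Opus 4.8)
The plan is to prove the three assertions in turn, using the birational-invariance theorem just established to replace, at each stage, the chosen log resolution of $Z$ by one adapted to the problem.

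\emph{Existence of the limit.} Fix a log resolution $f\colon X\to Z$ and any admissible perturbation $C_\varepsilon=\sum\varepsilon b_iC_i$ (so $b_i\neq0$ on the $-1$ curves). Written out through the Borisov--Libgober formula, $Ell(X,C+C_\varepsilon;z,\tau)$ is a meromorphic function of $\varepsilon$ near $0$ whose only possible poles come from the denominator factors $\vartheta((a_i+\varepsilon b_i+1)z)$ attached to the $-1$ curves, where $a_i+1=0$. Hence the limit exists iff the polar part at $\varepsilon=0$ vanishes, and by the projection formula this polar part localises on $\bigcup_{a_i=-1}C_i$: the coefficient of an order-$s$ pole is an iterated residue, i.e.\ an integral, over a connected string of $s$ mutually intersecting $-1$ curves (three divisors cannot meet at a point of a surface, so such a sub-configuration is a chain), of the restriction of the integrand. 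I would first reduce, via birational invariance, to the minimal good resolution, on which I expect the $-1$ curves to be rational — this needs to be checked against the classification of normal surface singularities, the genus-one case being the excluded simple elliptic one, and a genus-one $-1$ curve, if present, contributing zero to the residue by the vanishing $Ell(E)=0$ used in Proposition \ref{simple elliptic proof}. So each $-1$ curve is a $\Proj^1$ with $TX|_{C_i}\cong\mathcal O(2)\oplus\mathcal O(-m_i)$; for the order-one pole along such a $\Proj^1$, restricting the integrand and using that $\vartheta$ is odd, hence $\vartheta(0)=\vartheta''(0)=0$, the degree-one terms coming from the tangent/normal contribution $(2-m_i)$ and from the adjacent divisor factors cancel and the residue is zero. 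This is the surface analogue of the computation $\lim_{\varepsilon\to0}\varepsilon\,Ell=\tfrac1m Ell(E)=0$ in Proposition \ref{simple elliptic proof}, with the role of $Ell(E)=0$ played by $\vartheta''(0)=0$. Iterating along chains disposes of the higher-order poles, and since the particular $b_i$ were never used, the limit exists for any perturbation.

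\emph{Identification with $Ell(Z;z,\tau)$.} Taking the ample perturbation determined by $K_X=f^*(K_Z+\varepsilon H)+C+C_\varepsilon$, I would compute the finite value of the limit by expanding one $-1$ curve at a time. The net effect of passing from $C+C_\varepsilon$ back to $C$ is that $Ell_{nv}(X,C;z,\tau)$ picks up exactly one correction per $-1$ curve and per edge joining two $-1$ vertices; these corrections are precisely the residues $\mathrm{Res}_{t=0}F(t)$ that appear in Cases 1--5 of the proof of the birational-invariance theorem — namely $\tfrac{\vartheta((a+2)z)\vartheta(az)}{\vartheta((a+1)z)^2}$ for a bridge (with multiplicity $m_i$), $\tfrac{\vartheta(3z)\vartheta(z)}{\vartheta(2z)^2}$ for each $-1$--$-1$ edge, and the $S_a$-contributions — so their sum is exactly the combinatorial part of Definition \ref{Def}, the bookkeeping through $R_a$, $B_a$, $B_a'$ being precisely what made that expression stable under blow-ups. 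I expect this to be the main obstacle: organising these residue contributions for an \emph{arbitrary} configuration of $-1$ curves (chains, higher intersection multiplicities, $-1$ curves meeting one another) so that the total telescopes to the expression in Definition \ref{Def}.

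\emph{Comparison with Batyrev--Veys.} When Veys' stringy $E$-function is defined, take a log resolution in which every $-1$ vertex is a bridge meeting each neighbour with multiplicity one; then $S_a=\emptyset$ for all $a$, removing the ambiguity, and by birational invariance $Ell(Z;z,\tau)$ may be computed from it. Specialising $q=0$ and using $Ell(X;z,q=0)=y^{-n/2}\chi_{-y}(X)$, the naive part $Ell_{nv}(X,C;z,q=0)$ reproduces — after the normalising power of $y$ — Batyrev's sum $\sum_{J\subset I\setminus S}E(C_J^o;u,v)\prod_J\tfrac{uv-1}{(uv)^{a_i+1}-1}$ over the non-$-1$ strata, by the known $q=0$ specialisation of the Borisov--Libgober elliptic genus of a pair, while each bridge correction $m_i\tfrac{\vartheta((a+2)z)\vartheta(az)}{\vartheta((a+1)z)^2}$ becomes, via $\vartheta(t,0)=2\sin\pi t$ and the relation $a_{i_1}+a_{i_2}=-2$ forced on a bridge, exactly Veys' term $\tfrac{m_i(uv-1)^2}{((uv)^{a_{i_1}+1}-1)((uv)^{a_{i_2}+1}-1)}$. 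This last step is a direct computation; the only care needed is in matching the $y^{-n/2}$ normalisation with the substitution relating $z$ to $uv$.
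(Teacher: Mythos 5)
Your overall strategy (perturb, show the residue at $\varepsilon=0$ vanishes, then identify the finite value with Definition \ref{Def} and specialize $q=0$) is the paper's strategy, but at each of the three steps you are missing the specific input that makes it work. For existence, the decisive fact — which the paper simply quotes from \cite{Veys} — is that for $Z$ without strictly log-canonical singularities every $-1$ curve of a log resolution is a rational bridge meeting one or two other exceptional curves with multiplicity one, the adjacent coefficients being $\neq -1$ and summing to $-2$. In particular the $-1$ curves are pairwise disjoint, so the perturbed genus has at most a simple pole in $\varepsilon$ and your apparatus of chains and iterated residues never arises; conversely, your proposed reduction to the minimal good resolution cannot be justified by the birational-invariance theorem, which concerns the combinatorial genus of Definition \ref{Def}, not the perturbed limits $\widehat{Ell}$ (which in general depend on the resolution and perturbation). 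More importantly, the vanishing mechanism you offer is not the right one: the residue attached to a $-1$ curve $C_j$ is $\frac{\vartheta(z)}{b_jz\vartheta'(0)}\,Ell(\Proj^1,a_{j_1}p_1+a_{j_2}p_2;z,\tau)$, and this vanishes by localization at the two fixed points using the oddness of $\vartheta$ together with the bridge relation $a_{j_1}+1=-(a_{j_2}+1)$ — not by a universal ``$\vartheta''(0)=0$'' cancellation. This is precisely where the hypothesis on the singularities enters, and your sketch never uses it.

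For the identification $\widehat{Ell}(Z)=Ell(Z)$ you concede the main step (``I expect this to be the main obstacle''), and indeed the paper's resolution of it is quite different from summing the Case 1--5 residues: it chooses a specific perturbation satisfying $m_jb_j=b_{j_1}+b_{j_2}$ at each bridge (not the ample-divisor perturbation, which need not satisfy this relation), reduces the desired equality to the universal local identity (\ref{reduced formula}) over $\Proj^1$ depending only on $(a_{j_k},b_{j_k},b_j,m_j)$, and proves that identity by realizing both sides on a chain of $m_j$ blow-ups of $\Proj^2$ and invoking the change-of-variable formula for the Borisov--Libgober genus of a pair with no $-1$ coefficients. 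Finally, your third step contains a genuine error: at $q=0$ the bridge correction equals $m_i\frac{(y^{a+2}-1)(y^{a}-1)}{(y^{a+1}-1)^2}$, while Veys' extra term, rewritten using $a_{i_2}=-2-a_{i_1}$, equals $\frac{-m_i(y-1)^2y^{a+1}}{(y^{a+1}-1)^2}$; these are different rational functions (for $a=0$ the first is $0$, the second is $-m_iy$), so the term-by-term match you assert fails. The equality holds only for the totals, because the $-1$ curve itself contributes differently to $Ell_{nv}$ than to Veys' truncated Batyrev sum; the paper sidesteps this bookkeeping by applying Proposition 3.9 of \cite{BLSing} to the perturbed pair $(X,C+C_\varepsilon)$, computing the bridge-stratum contribution $\frac{(y-1)^2(y^{\varepsilon(b_{j_1}+b_{j_2})}-1)}{(y^{\varepsilon b_j}-1)(y^{a_{j_1}+\varepsilon b_{j_1}}-1)(y^{a_{j_2}+\varepsilon b_{j_2}}-1)}$, simplifying with $b_{j_1}+b_{j_2}=m_jb_j$, and letting $\varepsilon\to 0$.
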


\begin{proof}
By \cite{Veys}, the only $-1$ vertices in the resolution graph of a log resolution of $Z$ are bridges connected with multiplicity one to their adjacent vertices. We first show that the perturbation approach to defining $\widehat{Ell}(Z;z,\tau)$ always gives a limit. Let $f:X\rightarrow Z$ be a log resolution of $Z$ with exceptional divisor $C = \sum a_i C_i$. Introduce a perturbation divisor $C_\varepsilon = \sum \varepsilon b_i C_i$. Then $Ell(X,C+C_\varepsilon;z,\tau)$ is given by:
\begin{align*}
&\int_X\prod_{i=1}^{2}\frac{{x_i}\vartheta(\twopi{x_i}-z)}
{\vartheta(\twopi{x_i})}
\prod_{a_j=-1}\frac{\vartheta(\twopi{C_j}-\varepsilon b_j z)\vartheta(z)}
{\vartheta(\twopi{C_j}-z)\vartheta(\varepsilon b_j z)}\times\\
&\prod_{a_k\neq-1}\frac{\vartheta(\twopi{C_k}-(a_k+1+\varepsilon b_k) z)\vartheta(z)}
{\vartheta(\twopi{C_k}-z)\vartheta((a_k+1+\varepsilon b_k) z)}.
\end{align*}
For each $C_j \cong \Proj^1$ with coefficient $a_j = -1$, let $C_{j_1}, C_{j_2}$ denote the exceptional curves which intersect $C_j$. Then the above integral splits into the following summation:
\begin{align*}
&\sum_{a_j=-1}\int_X\prod_{i=1}^{2}\frac{{x_i}\vartheta(\twopi{x_i}-z)}
{\vartheta(\twopi{x_i})}
\frac{\vartheta(\twopi{C_j}-\varepsilon b_j z)\vartheta(z)}
{\vartheta(\twopi{C_j}-z)\vartheta(\varepsilon b_j z)}\times\\
&\prod_{k=1}^2\frac{\vartheta(\twopi{C_{j_k}}-(a_{j_k}+1+\varepsilon b_{j_k}) z)\vartheta(z)}
{\vartheta(\twopi{C_{j_k}}-z)\vartheta((a_{j_k}+1+\varepsilon b_{j_k}) z)} +...
\end{align*}
Here the $+...$ indicates terms which do not involve curves with $-1$ coefficients, and therefore have well-defined limits as $\varepsilon\to 0$. Multiplying the above expression by $\varepsilon$ and taking the limit as $\varepsilon\to 0$ therefore gives us:
$$\sum_{a_j=-1}\frac{\vartheta(z)}{b_jz\vartheta'(0)}
Ell(\Proj^1,a_{j_1}p_1+a_{j_2}p_2;z,\tau).$$
Since $a_{j_1}+a_{j_2} = -2$, $Ell(\Proj^1,a_{j_1}p_1+a_{j_2}p_2;z,\tau) = 0$. The easiest way to see this is to let $p_1$ and $p_2$ be the north and south poles of $\Proj^1$, and compute $Ell(\Proj^1,a_{j_1}p_1+a_{j_2}p_2;z,\tau)$ using the localization formula, with the standard $S^1$ action on $\Proj^1$. This gives us:
$$\frac{\vartheta(t-(a_{j_1}+1)z)\vartheta(z)}
{\vartheta(t-z)\vartheta((a_{j_1}+1)z)}+
\frac{\vartheta(-t-(a_{j_2}+1)z)\vartheta(z)}
{\vartheta(-t-z)\vartheta((a_{j_2}+1)z)}.$$
This sum clearly evaluates to zero, since $(a_{j_1}+1)=-(a_{j_2}+1)$. It follows that $\lim_{\varepsilon\to 0}Ell(X,C+C_\varepsilon;z,\tau)$ exists.

We now show that $\widehat{Ell}(Z;z,\tau) = Ell(Z;z,\tau)$ for an appropriate choice of a perturbation $C_\varepsilon$. We introduce the following perturbation: Let $C_j\cong \Proj^1$ be a component of $C$ with coefficient $-1$ connected to $C_{j_k}$, $k=1,2$. Let $m_j = -C_j\cdot C_j$. Define $C_{j,\varepsilon}$ to be the perturbation $\varepsilon b_{j_1}C_{j_1}+\varepsilon b_{j_2}C_{j_2}+\varepsilon b_jC_j$, where $b_{j_k}, b_j$ satisfy $m_jb_j= b_{j_1}+b_{j_2}$. Define $C_\varepsilon = \sum_{a_j=-1}C_{j,\varepsilon}$. We take $\widehat{Ell}(Z;z,\tau)$ to be the limit as $\varepsilon\to 0$ of $Ell(X,C+C_\varepsilon;z,\tau)$.

$Ell_{nv}(X,C;z,\tau)$ exhibits the following sum decomposition:
\begin{align*}
\sum_{a_j=-1}\int_X\prod_{i=1}^{2}\frac{{x_i}\vartheta(\twopi{x_i}-z)}
{\vartheta(\twopi{x_i})}
\frac{\vartheta(\twopi{C_j}+2z)\vartheta(z)}
{\vartheta(\twopi{C_j}+z)\vartheta(2 z)}
\prod_{k=1}^2\frac{\vartheta(\twopi{C_{j_k}}-(a_{j_k}+1) z)\vartheta(z)}
{\vartheta(\twopi{C_{j_k}}-z)\vartheta((a_{j_k}+1) z)}+...
\end{align*}
where again $+...$ represents terms that do not involve exceptional components with coefficients $a_i = -1$. It is clear that the terms represented by $+...$ in the expressions for $Ell(X,C+C_\varepsilon;z,\tau)$ and $Ell_{nv}(X,C;z,\tau)$ coincide when we let $\varepsilon\to 0$. Therefore we are reduced to proving that:
\begin{align*}
&\int_X\prod_{i=1}^{2}\frac{{x_i}\vartheta(\twopi{x_i}-z)}
{\vartheta(\twopi{x_i})}
\frac{\vartheta(\twopi{C_j}+2z)\vartheta(z)}
{\vartheta(\twopi{C_j}+z)\vartheta(2 z)}
\prod_{k=1}^2\frac{\vartheta(\twopi{C_{j_k}}-(a_{j_k}+1) z)\vartheta(z)}
{\vartheta(\twopi{C_{j_k}}-z)\vartheta((a_{j_k}+1) z)}+\\
&m_j\frac{\vartheta((a_{j_1}+2)z)\vartheta(a_{j_1}z)}
{\vartheta((a_{j_1}+1)z)^2} =
\lim_{\varepsilon\to 0}\int_X\prod_{i=1}^{2}\frac{{x_i}\vartheta(\twopi{x_i}-z)}
{\vartheta(\twopi{x_i})}
\frac{\vartheta(\twopi{C_j}-\varepsilon b_j z)\vartheta(z)}
{\vartheta(\twopi{C_j}-z)\vartheta(\varepsilon b_j z)}\times\\
&\prod_{k=1}^2\frac{\vartheta(\twopi{C_{j_k}}-(a_{j_k}+1+\varepsilon b_{j_k}) z)\vartheta(z)}
{\vartheta(\twopi{C_{j_k}}-z)\vartheta((a_{j_k}+1+\varepsilon b_{j_k}) z)}.
\end{align*}
In the above equation, expand $\frac{\vartheta(\twopi{C_j}+2z)\vartheta(z)}
{\vartheta(\twopi{C_j}+z)\vartheta(2 z)}$ as $1+C_jG(C_j,z)$, and expand $\frac{\vartheta(\twopi{C_j}-\varepsilon b_j z)\vartheta(z)}
{\vartheta(\twopi{C_j}-z)\vartheta(\varepsilon b_j z)}$ as $1+C_jF(C_j,z)$. The above equation reduces to:
\begin{align}\label{reduced formula}
&\int_{\Proj^1}\frac{{x}\vartheta(\twopi{x}-z)}
{\vartheta(\twopi{x})}\frac{{\nu}\vartheta(\twopi{\nu}-z)}
{\vartheta(\twopi{\nu})}G(\nu,z)
\prod_{k=1}^2\frac{\vartheta(\twopi{p_{j_k}}-(a_{j_k}+1) z)\vartheta(z)}
{\vartheta(\twopi{p_{j_k}}-z)\vartheta((a_{j_k}+1) z)}+
\end{align}
\begin{align*}
&m_j\frac{\vartheta((a_{j_1}+2)z)\vartheta(a_{j_1}z)}
{\vartheta((a_{j_1}+1)z)^2} =
\lim_{\varepsilon\to 0}
\int_{\Proj^1}\frac{{x}\vartheta(\twopi{x}-z)}
{\vartheta(\twopi{x})}\frac{{\nu}\vartheta(\twopi{\nu}-z)}
{\vartheta(\twopi{\nu})}F(\nu,z)\times\\
&\prod_{k=1}^2\frac{\vartheta(\twopi{p_{j_k}}-(a_{j_k}+1+\varepsilon b_{j_k}) z)\vartheta(z)}
{\vartheta(\twopi{p_{j_k}}-z)\vartheta((a_{j_k}+1+\varepsilon b_{j_k}) z)}.
\end{align*}
Here $p_{j_k} = C_j\cap C_{j_k}$, $x = c_1(\Proj^1)$, and $\nu = c_1(\mathcal{O}(-m_j))$. Notice that the above equation depends only on the values of $a_{j_k}, b_j,b_{j_k}$, and $m_j$. Assume first that $m_j = 1$.

Let $C_1$ and $C_2$ be the curves in $\Proj^2 = \set{[x:y:z]}$ given by $x=0$ and $y=0$. Consider the relative elliptic genus $Ell(\Proj^2,a_{j_1}C_1+a_{j_2}C_2;z,\tau)$. Let $\Bl{\Proj^2}$ be the blow-up of $\Proj^2$ at $C_1\cap C_2$ with exceptional divisor $E$. Then 
\begin{align}\label{Proj2 eqn} 
&Ell(\Bl{\Proj^2},a_{j_1}C_1+a_{j_2}C_2-E;z,\tau)+
\frac{\vartheta((a_{j_1}+2)z)\vartheta(a_{j_1}z)}
{\vartheta((a_{j_1}+1)z)^2}=
\end{align}
\begin{align*}
&Ell(\Proj^2,a_{j_1}C_1+a_{j_2}C_2;z,\tau).
\end{align*}
Let $\Delta = b_{j_1}C_1+b_{j_2}C_2$. Consider the relative elliptic genus $Ell(\Proj^2,a_{j_1}C_1+a_{j_2}C_2+\varepsilon\Delta;z,\tau)$. By the change of variable formula for the elliptic genus, this is equal to 
$$Ell(\Bl{\Proj^2},(a_{j_1}+\varepsilon b_{j_1})C_{j_1}+
(a_{j_2}+\varepsilon b_{j_2})C_{j_2}+(\varepsilon b_j-1)E;z,\tau).$$
Clearly taking the limit of the above expression as $\varepsilon\to 0$ reproduces the relative elliptic genus $Ell(\Proj^2,a_{j_1}C_1+a_{j_2}C_2;z,\tau)$. Comparing this limit with the LHS of (\ref{Proj2 eqn}) reduces to the exact same equation as in (\ref{reduced formula}). This proves equation (\ref{reduced formula}) when $m_j=1$. 

Assume now that $m_j > 1$. With $C_1, C_2 \subset \Proj^2$ defined as above, define $\Delta = (b_j-b_{j_2})C_1+b_{j_2}C_2$. Define a sequence of $m_j$ blow-ups of $\Proj^2$ as follows: the first blow-up occurs at $C_1\cap C_2$. Let $E$ be the exceptional divisor. The second blow-up occurs at $\Bl{C}_1\cap E$, where $\Bl{C}_1$ denotes the proper transform of $C_1$. For $i>1$, if $\pi_i: \mathrm{Bl}_i\Proj^2\rightarrow \mathrm{Bl}_{i-1}\Proj^2$ is the $i$-th blow-up, with exceptional divisor $D_i$, then the $(i+1)$-st blow-up occurs at $\Bl{E}\cap D_i$, where $\Bl{E}$ is the proper transform of $E$. It is easy to check that at the $m_j$-th stage, $\Bl{E}$ intersects $\Bl{C}_2$ and $\Bl{D}_{m_j}$ once and that the coefficients of these divisors are $b_{j_2}$ and $b_{j_1}$, respectively. By the change of variable formula, 
$$Ell(\Proj^2,a_{j_1}C_1+a_{j_2}C_2;z,\tau)=
\lim_{\varepsilon\to 0} Ell(\mathrm{Bl}_{m_j}\Proj^2,D;z,\tau)$$
where $D$ is defined by $\pi_{m_j}^*(K_{\Proj_2}-a_{j_1}C_1-a_{j_2}C_2-\varepsilon\Delta)+D = K_{\mathrm{Bl}_{m_j}\Proj^2}$. The proof then follows by applying the same argument as in the $m_j=1$ case to $\mathrm{Bl}_{m_j}\Proj^2$.

Finally we show that the stringy $\chi_y$ genus defined by $Ell(Z;z,\tau)$ coincides with Veys' stringy $\chi_y$ genus. By proposition $3.9$ in \cite{BLSing}, the relative $\chi_y$ 
genus defined by the relation $y\lim_{\tau\to i\infty}Ell(X,D;z,\tau) = \chi_{-y}(X,D)$ coincides with the stringy $\chi_y$ genus of $(X,D)$. 
Consider therefore the relative $\chi_y$ genus $\chi_y(X,C+C_\varepsilon)$ where $C_\varepsilon$ is the perturbation defined above. Let $C_j$ be a divisor with coefficient $-1$ intersecting divisors $C_{j_k}$, $k=1,2$. Then the contribution to $\chi_y(X,C+C_\varepsilon)$ coming from $C_j$ is given by:
$$\frac{(y-1)^2(y^{\varepsilon (b_{j_1}+b_{j_2})}-1)}
{(y^{\varepsilon b_j}-1)(y^{a_{j_1}+\varepsilon b_{j_1}}-1)
(y^{a_{j_2}+\varepsilon b_{j_2}}-1)}.$$
By assumption $b_{j_1}+b_{j_2} = m_j b_j$. Thus, the above contribution simplifies to:
$$\frac{(y-1)^2(y^{\varepsilon b_j(m_j-1)}+...+1)}
{(y^{a_{j_1}+\varepsilon b_{j_1}}-1)
(y^{a_{j_2}+\varepsilon b_{j_2}}-1)}.$$
Taking the limit at $\varepsilon\to 0$ gives precisely the same contribution to the stringy $\chi_y$ genus as in Veys' formula. This completes the proof.
\end{proof}

In case all the bridges of the resolution graph of $Z$ are connected to a single vertex with coefficient $-2$, we may define $\widehat{Ell}(Z;z,\tau)$ by using the perturbation by an ample divisor approach of Borisov and Libgober. In this case we get in addition that $\widehat{Ell}(Z;z,\tau)$ is holomorphic. We may therefore compare $\widehat{Ell}(Z;z,\tau)$ to $Ell(Z;z,\tau)$ using the same argument as in proposition \ref{simple elliptic proof}. All of the essential details are illustrated by the following example:
\begin{ex}\rm
Consider a complete normal surface $Z$ with a singularity of type $x^2+y^3+yz^5$, and with otherwise log-terminal singularities. For example, we could let $Z$ be the singular variety defined by the equation $x_1^2+x_4^3x_2^3+x_2x_3^5$ in $\Proj(3,1,1,1)$. Aside from the above prescribed singularity in the affine neighborhood $x_4 \neq 0$, this space has an $E_8$ (hence canonical) singularity in the neighborhood $x_2\neq 0$. To resolve the $x^2+y^3+yz^5$ singularity, first apply a weighted blow-up at the origin, with weights $(3,2,1)$. The proper transform has an isolated $A_1$-singularity; blowing up one more time gives a resolution of singularities with two $\Proj^1$ exceptional curves. However, it is not a log resolution, since the two exceptional curves intersect at a single point with multiplicity $2$. Blowing up two more times therefore gives a log resolution $\pi:X\rightarrow Z$ with four exceptional curves $E_i \cong \Proj^1$. It is straight-forward to check that $K_X = \pi^*K_Z - E_1-E_2-E_3-2E_4$, with $E_iE_4 = 1$ for $i = 1,2,3$, and $E_iE_j = 0$ for $i,j\neq 4, i\neq j$. The resolution graph of $X$ therefore consists of a vertex for $E_4$ connected by three edges to the three $-1$ vertices corresponding to $E_1, E_2, E_3$.

We now verify that $Ell(Z;z,\tau)$ is holomorphic. First observe that for $p\in \Proj^1$, $Ell(\Proj^1, -2p;z,\tau) = 0$. Indeed, we may assume that $p$ is the north pole of $\Proj^1$ and use the localization formula with the obvious $S^1$ action. We get that $Ell(\Proj^1, -2p;z,\tau) =$
\begin{align*}
\lim_{t\to 0} \frac{\vartheta(t-z)\vartheta(t+z)}{\vartheta(t)\vartheta(t-z)}
\frac{\vartheta(-z)}{\vartheta(z)}+\frac{\vartheta(-t-z)}{\vartheta(-t)} = 0
\end{align*}
It follows that the contribution of $Ell(E_i,-2E_4)\frac{\vartheta(z)}{\vartheta(2z)}$ for $i=1,2,3$ to $Ell(Z;z,\tau)$ is equal to zero. Furthermore, since the coefficient $a_4$ of $E_4$ is equal to $-2$, the contributions $\frac{\vartheta((2+a_4)z)\vartheta(a_4z)}{\vartheta((1+a_4)z)^2}$ to $Ell(Z;z,\tau)$ also vanish.
Therefore, the singular elliptic genus of $Z$ is given by the expression:
\begin{align*}
\int_X \prod_{i=1}^2\frac{{x_i}\vartheta(\twopi{x_i}-z)}
{\vartheta(\twopi{x_i})}\prod_{j=1}^{3}\frac{\vartheta(\twopi{E_j}+2z)\vartheta(z)}{\vartheta(\twopi{E_j}+z)\vartheta(2z)}\cdot\frac{\vartheta(\twopi{E_4}+z)}{\vartheta(\twopi{E_4}-z)}(-1)
\end{align*}
Since the $E_j$ are pair-wise disjoint for $j = 1,2,3$, the above expression may be rewritten as:
\begin{align*}
\sum_{j=1}^3\int_X \prod_{i=1}^2\frac{{x_i}\vartheta(\twopi{x_i}-z)}
{\vartheta(\twopi{x_i})}\frac{\vartheta(\twopi{E_j}+2z)\vartheta(z)}{\vartheta(\twopi{E_j}+z)\vartheta(2z)}\cdot\frac{\vartheta(\twopi{E_4}+z)}{\vartheta(\twopi{E_4}-z)}(-1)
\end{align*}
Clearly the only possible poles for $Ell(Z;z,\tau)$ occur at the half-integral lattice points. We will verify that $Ell(Z;z,\tau)$ has no pole at $z=\frac{\tau}{2}$. As usual, the proofs for the remaining cases are the same. Evaluating the limit $\lim_{z\to\frac{\tau}{2}}\vartheta(2z)Ell(Z;z,\tau)$ gives:
\begin{align*}
&\sum_{j=1}^3\int_X \prod_{i=1}^2\frac{{x_i}\vartheta(\twopi{x_i}-\frac{\tau}{2})}
{\vartheta(\twopi{x_i})}\frac{\vartheta(\twopi{E_j}+\tau)\vartheta(\frac{\tau}{2})}{\vartheta(\twopi{E_j}+\frac{\tau}{2})}\cdot\frac{\vartheta(\twopi{E_4}+\frac{\tau}{2})}{\vartheta(\twopi{E_4}-\frac{\tau}{2})}(-1)\\
&=3e^{-\pi i\tau}\vartheta(\frac{\tau}{2})Ell(\Proj^1,-2p;\frac{\tau}{2},\tau) = 0.
\end{align*}
The triviality of $Ell(\Proj^1,-2p;z,\tau)$ also implies that $\widehat{Ell}(Z;z,\tau)$ is holomorphic. By the same argument as in proposition \ref{simple elliptic proof}, $Ell(Z;z,\tau) = \widehat{Ell}(Z;z,\tau)$.
\end{ex}

\end{document}